



\documentclass[10pt]{llncs}  
\usepackage{amsmath,amssymb,amsfonts,graphicx, hyperref}



\date{February 8, 2010}

\newcommand{\defi}[1]{\textit{#1}}

\newcommand{\R}{\mathbb{R}}

\newcommand{\E}{\mathbb{E}}

\DeclareMathOperator{\ort}{O}

\DeclareMathOperator{\sdp}{SDP}
\DeclareMathOperator{\sign}{sign}
\DeclareMathOperator{\Tr}{Tr}

\begin{document}

\title{The positive semidefinite Grothendieck problem with rank constraint}

\author{
 Jop Bri\"et\inst{1}\thanks{The first author is supported by Vici grant 639.023.302 from the Netherlands Organization for Scientific Research (NWO), by the European Commission under the Integrated Project Qubit Applications (QAP) funded by the IST directorate as Contract Number 015848, and by the Dutch BSIK/BRICKS project.}, 
 Fernando M\'ario de Oliveira Filho\inst{2}\thanks{The second author was partially supported by CAPES/Brazil under grant BEX 2421/04-6, and the research was carried out at the Centrum Wiskunde \& Informatica, The Netherlands.} 
 and 
 Frank Vallentin\inst{3}\thanks{The third author is supported by Vidi grant 639.032.917 from the Netherlands Organization for Scientific Research (NWO).}
}

\institute{
 Centrum Wiskunde \& Informatica (CWI), Science Park 123, 1098 SJ Amsterdam, The Netherlands. \email{j.briet@cwi.nl}, 
\and
Department of Econometrics ad OR, Tilburg University, 5000 LE Tilburg, The Netherlands.
 \email{f.m.de.oliveira.filho@cwi.nl}
 \and
 Delft Institute of Applied Mathematics, Technical University of Delft, P.O. Box 5031, 2600 GA Delft, The Netherlands. \email{f.vallentin@tudelft.nl}
}

\maketitle

\begin{abstract}
Given a positive integer $n$ and a positive semidefinite matrix $A = (A_{ij}) \in \R^{m \times m}$, the positive semidefinite Grothendieck problem with rank-$n$-constraint $(\sdp_n)$ is
\begin{equation*} 
\text{maximize } \sum_{i=1}^m \sum_{j=1}^m A_{ij} \; x_i \cdot x_j,\qquad\text{where } x_1, \ldots, x_m \in S^{n-1}.
\end{equation*}
In this paper we design a randomized polynomial-time approximation algorithm for $\sdp_n$ achieving an approximation ratio of 
\begin{equation*}
\gamma(n) = \frac{2}{n}\left(\frac{\Gamma((n+1)/2)}{\Gamma(n/2)}\right)^2 = 1 - \Theta(1/n).
\end{equation*}
We show that under the assumption of the unique games conjecture the achieved approximation ratio is optimal: There is no polynomial-time algorithm which approximates $\sdp_n$ with a ratio greater than $\gamma(n)$.
We improve the approximation ratio of the best known polynomial-time algorithm for $\sdp_1$ from $2/\pi$ to $2/(\pi\gamma(m)) =  2/\pi + \Theta(1/m)$, and we show a tighter approximation ratio for $\sdp_n$ when $A$ is the Laplacian matrix of a graph with nonnegative edge~weights.
\end{abstract}

\markboth{J.~Bri\"et, F.M.~de Oliveira Filho, F.~Vallentin}{The positive semidefinite Grothendieck problem with rank constraint}

\section{Introduction}

Given a positive integer $n$ and a positive semidefinite matrix $A = (A_{ij}) \in \R^{m \times m}$,  the \defi{positive semidefinite Grothendieck problem with rank-$n$-constraint} is defined as
\begin{equation*}
\sdp_n(A) = \max\biggl\{\sum_{i=1}^m \sum_{j=1}^m A_{ij} \; x_i \cdot x_j : x_1, \ldots, x_m \in S^{n-1}\biggr\},
\end{equation*}
where $S^{n-1} = \{x \in \R^n : x \cdot x = 1\}$ is the unit
sphere. Note that the inner product matrix of the vectors $x_1,
\ldots, x_m$ has rank $n$.  This problem was introduced by Bri\"et,
Buhrman, and Toner \cite{BrietBuhrmanToner} in the context of quantum
nonlocality where they applied it to nonlocal XOR games.  The case $n
= 1$ is the classical positive semidefinite Grothendieck problem where
$x_1, \ldots, x_m \in \{-1,+1\}$. It was introduced by Grothendieck
\cite{Grothendieck} in the study of norms of tensor products of Banach
spaces.  It is an $\mathrm{NP}$-hard problem: If $A$ is the Laplacian
matrix of a graph then $\sdp_1(A)$ coincides with the value of a
maximum cut of the graph. The maximum cut problem (MAX CUT) is one of
Karp's 21 $\mathrm{NP}$-complete problems.  Over the last years, there
has been a lot of work on algorithmic applications, interpretations,
and generalizations of the Grothendieck problem and the companion
Grothendieck inequalities. For instance, Nesterov \cite{Nesterov}
showed that it has applications to finding and analyzing semidefinite
relaxations of nonconvex quadratic optimization problems. Ben-Tal and
Nemirovski \cite{BenTalNemirovski} showed that it has applications to
quadratic Lyapunov stability synthesis in system and control
theory. Alon and Naor \cite{AlonNaor} showed that it has applications
to constructing Szemer\'edi partitions of graphs and to estimating the
cut norm of matrices. Linial and Shraibman \cite{LinialSchraibman}
showed that it has applications to finding lower bounds in
communication complexity. Khot and Naor \cite{KhotNaor},
\cite{KhotNaor2} showed that it has applications to kernel
clustering. For other applications, see also Alon, Makarychev,
Makarychev, and Naor \cite{AlonMakarychevMakarychevNaor}, and
Raghavendra and Steurer \cite{RaghavendraSteurer}.

One can reformulate the positive semidefinite Grothendieck problem with rank-$n$-constraint as a semidefinite program with an additional rank constraint:
\begin{equation*}
\begin{split}
\text{maximize } & \sum_{i = 1}^m \sum_{j = 1}^m A_{ij} X_{ij}\\
\text{subject to } & \text{$X = (X_{ij}) \in \R^{m \times m}$ is positive semidefinite,}\\
& X_{ii} = 1,\quad\text{for $i = 1, \ldots, m$,}\\
& \text{$X$ has rank at most $n$.}
\end{split}
\end{equation*}
When $n$ is a constant that does not depend on the matrix size $m$ there is no polynomial-time algorithm known which solves $\sdp_n$.  It is also not known if the problem $\sdp_n$ is $\mathrm{NP}$-hard when $n \geq 2$.  On the other hand the \defi{semidefinite relaxation} of $\sdp_n(A)$ defined by
\begin{equation*}
\sdp_{\infty}(A) = \max\biggl\{\sum_{i = 1}^m \sum_{j = 1}^m A_{ij} \; u_i \cdot u_j : u_1, \ldots, u_m \in S^{\infty}\biggr\}
\end{equation*}
can be computed in polynomial time to any desired precision by using,
e.g., the ellipsoid method. Here $S^{\infty}$ denotes the unit sphere
of the Hilbert space $l^2(\R)$ of square summable sequences, which
contains $\R^n$ as the subspace of the first $n$ components. Clearly,
it would suffice to use unit vectors in $\R^m$ for solving
$\sdp_{\infty}(A)$ when $A \in \R^{m \times m}$, but using
$S^{\infty}$ will simplify many formulations in this paper.  Rietz
\cite{Rietz} (in the context of the Grothendieck inequality) and
Nesterov \cite{Nesterov} (in the context of approximation algorithms
for $\mathrm{NP}$-hard problems) showed that $\sdp_1$ and
$\sdp_{\infty}$ are always within a factor of at most $2/\pi$ from
each other. That is, for all positive semidefinite matrices $A \in
\R^{m \times m}$ we have
\begin{equation}
\label{eq:rietznesterov}
1 \geq \frac{\sdp_1(A)}{\sdp_{\infty}(A)} \geq \frac{2}{\pi}.
\end{equation}

By exhibiting an explicit series of positive semidefinite matrices,
Grothen\-dieck~\cite{Grothendieck} (see also Alon and Naor
\cite[Section 5.2]{AlonNaor}) showed that one cannot improve the
constant $2/\pi$ to $2/\pi + \varepsilon$ for any positive
$\varepsilon$ which is independent of~$m$. Nesterov~\cite{Nesterov}
gave a randomized polynomial-time approximation algorithm for $\sdp_1$
with approximation ratio $2/\pi$ which can be derandomized using the
techniques presented by Mahajan and Ramesh~\cite{MahajanRamesh}. This
algorithm is optimal in the following sense: Khot and Naor
\cite{KhotNaor} showed that under the assumption of the unique games
conjecture (UGC) there is no polynomial-time algorithm which
approximates $\sdp_1$ to within a ratio of~$2/\pi + \varepsilon$ for any
positive $\varepsilon$ independent of~$m$. The unique games conjecture
was introduced by Khot \cite{Khot} and by now many tight UGC hardness
results are known, see e.g.\ Khot, Kindler, Mossel, and O'Donnell
\cite{KhotKindlerMosselODonnel} for the maximum cut problem, Khot and
Regev \cite{KhotRegev} for the minimum vertex cover problem, and
Raghavendra \cite{Raghavendra} for general constrained satisfaction
problems.  The aim of this paper is to provide a corresponding
analysis for $\sdp_n$.

\subsection*{Our results}

In Section~\ref{sec:psd} we start by reviewing our methodological contributions:  Our main contribution is the analysis of a rounding scheme which can deal with rank-$n$-constraints in semidefinite programs. For this we use the Wishart distribution from multivariate statistics (see e.g.\ Muirhead \cite{Muirhead}). We believe this analysis is of independent interest and will turn out to be useful in different contexts, e.g.\ for approximating low dimensional geometric embeddings.  Our second contribution is that we improve the constant in inequality \eqref{eq:rietznesterov} slightly by considering functions of positive type for the unit sphere $S^{m-1}$ and applying a characterization of Schoenberg \cite{Schoenberg}. This slight improvement is the key for our UGC hardness result of approximating $\sdp_n$ given in Theorem~\ref{th:hardness}.  We analyze our rounding scheme in Section~\ref{sec:algorithm}.

\begin{theorem}
\label{th:algorithm}
For all positive semidefinite matrices $A \in \R^{m \times m}$ we have
\begin{equation*}
1 \geq \frac{\sdp_n(A)}{\sdp_{\infty}(A)} \geq \gamma(n) = \frac{2}{n}\left(\frac{\Gamma((n+1)/2)}{\Gamma(n/2)}\right)^2 = 1 - \Theta(1/n),
\end{equation*}
and there is a randomized polynomial-time approximation algorithm for $\sdp_n$ achieving this ratio.
\end{theorem}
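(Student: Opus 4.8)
The plan is as follows. The upper bound $\sdp_n(A)\le\sdp_\infty(A)$ is immediate from $S^{n-1}\subseteq S^\infty$, so the work is in the lower bound and the rounding algorithm. First I would solve $\sdp_\infty(A)$ to within any fixed precision in polynomial time (e.g.\ by the ellipsoid method), obtaining unit vectors $u_1,\dots,u_m\in\R^m$ with $\sum_{i,j}A_{ij}\,u_i\cdot u_j=\sdp_\infty(A)$. The rounding step is a random Gaussian projection followed by normalization: sample $Z\in\R^{n\times m}$ with independent standard Gaussian entries and set $x_i=Zu_i/\|Zu_i\|$, which almost surely is a well-defined point of $S^{n-1}$; the vectors $x_1,\dots,x_m$ are then feasible for $\sdp_n(A)$.

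Everything then reduces to understanding the function $E_n\colon[-1,1]\to[-1,1]$ with $E_n(t)=\E[x_i\cdot x_j]$ whenever $u_i\cdot u_j=t$. This is well defined because, writing $M$ for the $2\times n$ matrix with rows $(Zu_i)^{\T}$ and $(Zu_j)^{\T}$, the $n$ columns of $M$ are i.i.d.\ Gaussian vectors in $\R^2$ with covariance matrix $\bigl(\begin{smallmatrix}1&t\\ t&1\end{smallmatrix}\bigr)$, so $MM^{\T}$ follows a Wishart distribution that depends on $u_i,u_j$ only through $t$, and $x_i\cdot x_j=(MM^{\T})_{12}\big/\sqrt{(MM^{\T})_{11}(MM^{\T})_{22}}$. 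The core of the argument is to show that
\begin{equation*}
E_n(t)=\gamma(n)\,t+\sum_{k\ge1}c_k^{(n)}\,t^{2k+1},\qquad\text{where all }c_k^{(n)}\ge0.
\end{equation*}
In words: $E_n$ is odd, its power series has nonnegative coefficients, and its linear coefficient is exactly $\gamma(n)$ (in particular $E_n(1)=1$, consistent with $x_i=x_j$ when $u_i=u_j$).

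Granting this, I would finish as follows. Put $U=(u_i\cdot u_j)_{i,j}$, which is positive semidefinite with unit diagonal, and let $U^{\circ\ell}$ be its $\ell$-fold Hadamard power. Then
\begin{align*}
\E\Bigl[\sum_{i,j}A_{ij}\,x_i\cdot x_j\Bigr]
&=\sum_{i,j}A_{ij}\,E_n(u_i\cdot u_j)\\
&=\gamma(n)\sum_{i,j}A_{ij}U_{ij}+\sum_{k\ge1}c_k^{(n)}\,\Tr\!\bigl(A\,U^{\circ(2k+1)}\bigr).
\end{align*}
By the Schur product theorem every $U^{\circ(2k+1)}$ is positive semidefinite, and since $A$ is positive semidefinite $\Tr(A\,U^{\circ(2k+1)})\ge0$; as $c_k^{(n)}\ge0$, each term of the last sum is nonnegative, so $\E[\sum_{i,j}A_{ij}\,x_i\cdot x_j]\ge\gamma(n)\,\sdp_\infty(A)\ge\gamma(n)\,\sdp_n(A)$. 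Choosing an outcome that meets the expectation already gives $\sdp_n(A)\ge\gamma(n)\,\sdp_\infty(A)$, hence the claimed ratio. To turn this into an algorithm I would use that $\sum_{i,j}A_{ij}\,x_i\cdot x_j=\Tr(A\,XX^{\T})$ lies in $[0,\sdp_\infty(A)]$ for every outcome, apply Markov's inequality to the nonnegative deficit $\sdp_\infty(A)-\sum_{i,j}A_{ij}\,x_i\cdot x_j$ to obtain objective value at least $(\gamma(n)-\varepsilon)\,\sdp_\infty(A)\ge(\gamma(n)-\varepsilon)\,\sdp_n(A)$ with constant probability, boost by independent repetitions, and optionally derandomize by the method of conditional expectations following Mahajan and Ramesh~\cite{MahajanRamesh}.

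The hard part is the displayed identity for $E_n$, and this is where the Wishart distribution enters. The most direct route is to write down the Wishart density (see Muirhead~\cite{Muirhead}) and integrate: up to a shift of the degrees-of-freedom parameter this is the classical computation of the mean of a sample correlation coefficient, and I expect it to produce the closed form $E_n(t)=\gamma(n)\,t\cdot{}_2F_1\!\bigl(\tfrac12,\tfrac12;\tfrac n2+1;t^2\bigr)$, whose hypergeometric factor has Taylor coefficients $\tfrac{(1/2)_k(1/2)_k}{(n/2+1)_k\,k!}>0$ and value $1$ at $t=0$, giving at once both the nonnegativity of the $c_k^{(n)}$ and the leading coefficient $\gamma(n)$. (As a reality check, for $n=1$ one has $t\,{}_2F_1(\tfrac12,\tfrac12;\tfrac32;t^2)=\arcsin t$ and $\gamma(1)=2/\pi$, recovering the Nesterov rounding.) A softer alternative is to note that for \emph{any} unit vectors the matrix $(E_n(u_i\cdot u_j))_{i,j}=\E[XX^{\T}]$ is an average of Gram matrices, hence positive semidefinite, so $E_n$ is of positive type for every sphere and Schoenberg's theorem~\cite{Schoenberg} forces nonnegative power-series coefficients; this still leaves one scalar computation, namely that the linear coefficient equals $\gamma(n)$, which by differentiating at $0$ reduces to the identity $\gamma(n)=(\E\|g\|)^2/\E[\|g\|^2]$ for a standard Gaussian $g\in\R^n$ — a one-line consequence of $\E\|g\|=\sqrt2\,\Gamma((n+1)/2)/\Gamma(n/2)$ and $\E[\|g\|^2]=n$. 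Pinning down this constant is the real content: it says the rounding loses nothing beyond first order and that the higher Hadamard powers only help, which is exactly why the ratio comes out to be $\gamma(n)$ rather than something weaker.
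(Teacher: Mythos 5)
Your proposal follows the paper's overall architecture: Gaussian projection rounding, the expectation function $E_n(t)$ of the inner product, Schoenberg positivity so that $E_n(t)-f_1 t$ remains of positive type, and hence the approximation ratio is the linear Taylor coefficient $f_1$. Your Hadamard-power/Schur-product phrasing is just a rewording of the paper's observation that $\langle A, B\rangle \geq 0$ for psd $A, B$, so that part is the same argument. Where you genuinely diverge is in establishing $f_1 = \gamma(n)$. The paper writes out the Wishart density, parameterizes $S^2_{\ge 0}$ in polar-type coordinates, and then grinds through Euler's integral representation of ${}_2F_1$ and Gauss's summation formula. Your ``softer alternative'' --- differentiate $E_n$ at $t=0$ under the expectation and reduce to moments of $\|g\|$ for a standard Gaussian $g\in\R^n$ --- is shorter and more elementary, and it does work: one gets $E_n'(0)=\frac{n-1}{n}\,\E\|g\|\cdot\E[\|g\|^{-1}]$ after conditioning and using rotational invariance of the direction, and this equals $\gamma(n)$ once you apply $\Gamma((n+1)/2)=\frac{n-1}{2}\Gamma((n-1)/2)$.

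Two caveats. First, your ``direct route'' claims the closed form $E_n(t)=\gamma(n)\,t\cdot{}_2F_1(\tfrac12,\tfrac12;\tfrac n2+1;t^2)$ but only says you ``expect'' it; you never derive it, and the paper does not prove such a closed form either --- it only computes the single coefficient $f_1$. The formula does pass the checks at $t=0$, $t=1$ and $n=1$, but as written it is an unproved assertion and cannot carry the argument on its own. Second, in the softer alternative you assert that differentiation at $0$ ``reduces to the identity $\gamma(n)=(\E\|g\|)^2/\E[\|g\|^2]$.'' That identity is true, but it is not what the differentiation produces; what falls out is $\tfrac{n-1}{n}\E\|g\|\cdot\E[\|g\|^{-1}]$, and a gamma functional equation is needed to match it to $\gamma(n)$. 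This is a small but genuine gap: you have substituted a tidier-looking constant for the one the calculus actually gives, without noting that a further step is needed to equate them. (One also needs a routine dominated-convergence justification for swapping $\partial/\partial t$ with $\E$, but that is standard.) With these two points addressed, the softer route gives a cleaner proof of $f_1=\gamma(n)$ than the paper's hypergeometric computation.
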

The first three values of $\gamma(n)$ are: 
\begin{equation*}
\begin{split}
& \gamma(1) = 2/\pi = 0.63661\ldots\\
& \gamma(2) =\pi/4 = 0.78539\ldots\\
& \gamma(3) = 8/(3\pi)= 0.84882\ldots
\end{split}
\end{equation*}

In Section~\ref{sec:improved} we show that one can improve inequality~\eqref{eq:rietznesterov} slightly:

\begin{theorem}
\label{th:improvement}
For all positive semidefinite matrices $A \in \R^{m \times m}$ we have
\begin{equation*} 
1 \geq \frac{\sdp_1(A)}{\sdp_{\infty}(A)} \geq \frac{2}{\pi\gamma(m)} = \frac{m}{\pi} \left(\frac{\Gamma(m/2)}{\Gamma((m+1)/2)}\right)^2 = \frac{2}{\pi} + \Theta\left(\frac{1}{m}\right),
\end{equation*}
and there is a polynomial-time approximation algorithm for $\sdp_1$ achieving this ratio.
\end{theorem}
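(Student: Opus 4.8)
The plan is to revisit Nesterov's random hyperplane rounding \cite{Nesterov} and to exploit that for an $m\times m$ matrix the optimum of $\sdp_\infty(A)$ is attained by vectors lying on $S^{m-1}$. (The case $m=1$ is trivial, so assume $m\ge2$.) Let $u_1,\dots,u_m\in S^{m-1}$ be (near-)optimal for $\sdp_\infty(A)$; this is legitimate because $u_1,\dots,u_m$ span a subspace of dimension at most $m$. Sampling a standard Gaussian vector $Z\in\R^m$ and putting $x_i=\sign(Z\cdot u_i)\in\{-1,+1\}$, the Grothendieck identity gives $\E[x_ix_j]=\tfrac2\pi\arcsin(u_i\cdot u_j)$, hence
\begin{equation*}
\E\Bigl[\,\sum_{i=1}^m\sum_{j=1}^m A_{ij}\,x_ix_j\,\Bigr]=\frac2\pi\sum_{i=1}^m\sum_{j=1}^m A_{ij}\,\arcsin(u_i\cdot u_j).
\end{equation*}
Thus it suffices to find the largest constant $c$ for which $t\mapsto\tfrac2\pi\arcsin(t)-c\,t$ is a function of positive type for $S^{m-1}$, equivalently for which the matrix $\bigl(\tfrac2\pi\arcsin(u_i\cdot u_j)-c\,u_i\cdot u_j\bigr)_{i,j}$ is positive semidefinite for every choice of $u_1,\dots,u_m\in S^{m-1}$: since $A\succeq0$ this gives $\E[\sum_{i,j}A_{ij}x_ix_j]\ge c\,\sdp_\infty(A)$, and as the rounded solution is feasible for $\sdp_1$ (and $\sdp_1(A)\le\sdp_\infty(A)$ trivially) we get $c\le\sdp_1(A)/\sdp_\infty(A)\le1$; we will show $c=2/(\pi\gamma(m))$ is admissible.

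By Schoenberg's characterization \cite{Schoenberg}, a continuous function $f\colon[-1,1]\to\R$ is of positive type for $S^{m-1}$ if and only if the coefficients in its expansion $f=\sum_{k\ge0}f_k\,C_k^{(m-2)/2}$ into Gegenbauer polynomials (normalized by $C_k^{(m-2)/2}(1)=1$) are all nonnegative. Now $\arcsin(t)=\sum_{j\ge0}\binom{2j}{j}\tfrac{t^{2j+1}}{4^j(2j+1)}$ has nonnegative Taylor coefficients, and every monomial $t\mapsto t^k$ is of positive type for $S^{m-1}$ because $\bigl((u_i\cdot u_j)^k\bigr)_{i,j}$ is the Gram matrix of the tensor powers $u_i^{\otimes k}$; hence $\arcsin$ is itself of positive type for $S^{m-1}$, so in its Gegenbauer expansion $\arcsin=\sum_k\hat a_k\,C_k^{(m-2)/2}$ all $\hat a_k\ge0$ and only odd $k$ occur. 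Since $C_1^{(m-2)/2}(t)=t$, subtracting $\tfrac2\pi\hat a_1\,t$ deletes exactly the $k=1$ term and leaves a nonnegative combination of the remaining $C_k^{(m-2)/2}$; therefore $t\mapsto\tfrac2\pi\arcsin(t)-\tfrac2\pi\hat a_1\,t$ is still of positive type for $S^{m-1}$, and we may take $c=\tfrac2\pi\hat a_1$.

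It remains to compute $\hat a_1$. With the orthogonality weight $(1-t^2)^{(m-3)/2}$ on $[-1,1]$ and $C_1^{(m-2)/2}(t)=t$,
\begin{equation*}
\hat a_1=\frac{\int_{-1}^1 t\,\arcsin(t)\,(1-t^2)^{(m-3)/2}\,dt}{\int_{-1}^1 t^2\,(1-t^2)^{(m-3)/2}\,dt}.
\end{equation*}
Integrating the numerator by parts against the antiderivative $v=-\tfrac1{m-1}(1-t^2)^{(m-1)/2}$ (whose boundary contribution vanishes for $m\ge2$) turns it into a multiple of the Wallis integral $\int_0^1(1-t^2)^{(m-2)/2}\,dt$, while the denominator is the Beta integral $B(3/2,(m-1)/2)$. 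Evaluating both and reducing the resulting Gamma quotients by means of $\Gamma(z+1)=z\Gamma(z)$ yields
\begin{equation*}
\hat a_1=\frac m2\left(\frac{\Gamma(m/2)}{\Gamma((m+1)/2)}\right)^{2}=\frac1{\gamma(m)},
\end{equation*}
so $c=\tfrac2\pi\hat a_1=\tfrac{2}{\pi\gamma(m)}=\tfrac m\pi\bigl(\Gamma(m/2)/\Gamma((m+1)/2)\bigr)^2$, and the asymptotics $\tfrac{2}{\pi\gamma(m)}=\tfrac2\pi+\Theta(1/m)$ follow from $\gamma(m)=1-\Theta(1/m)$ (Theorem~\ref{th:algorithm}).

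Putting this together: the algorithm solves $\sdp_\infty(A)$ to sufficient precision with the ellipsoid method, obtains vectors $u_1,\dots,u_m\in S^{m-1}$, performs the hyperplane rounding above, and is derandomized by the method of conditional expectations of Mahajan and Ramesh \cite{MahajanRamesh}; by the analysis its output is a $\{-1,+1\}$ solution of value at least $\tfrac{2}{\pi\gamma(m)}\,\sdp_\infty(A)\ge\tfrac{2}{\pi\gamma(m)}\,\sdp_1(A)$, which in particular proves $\sdp_1(A)/\sdp_\infty(A)\ge2/(\pi\gamma(m))$. I expect the main obstacle to be the second step: realizing that the rounding analysis can be carried out over the sphere $S^{m-1}$ rather than over Hilbert space---legitimate precisely because $A$ has size $m$---which by Schoenberg's theorem admits a strictly larger constant, and then identifying the leading Gegenbauer coefficient of $\arcsin$ in closed form as $1/\gamma(m)$; the contribution of the higher (odd) Gegenbauer coefficients causes no loss only because $\arcsin$ has a nonnegative Taylor expansion.
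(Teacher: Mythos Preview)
Your proposal is correct and follows essentially the same route as the paper: both exploit that the optimal $u_i$ lie on $S^{m-1}$, invoke Schoenberg's characterization of positive-type functions on that finite-dimensional sphere, observe that subtracting the degree-one coefficient of $\arcsin$ in its Jacobi/Gegenbauer expansion preserves positive type, and then compute that coefficient as the ratio $\int t\arcsin(t)(1-t^2)^{(m-3)/2}\,dt\big/\int t^2(1-t^2)^{(m-3)/2}\,dt=1/\gamma(m)$. The only cosmetic difference is that you evaluate the numerator by integration by parts (reducing it to a Wallis integral), whereas the paper substitutes $t=\sin\theta$ directly; both lead to the same Gamma-function quotient.
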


With this, the current complexity status of the problem $\sdp_1$ is similar to the one of the minimum vertex cover problem. Karakostas \cite{Karakostas} showed that one can approximate the minimum vertex cover problem for a graph having vertex set $V$ with an approximation ratio of $2-\Theta(1/\sqrt{\log |V|})$ in polynomial time. On the other hand, Khot and Regev \cite{KhotRegev} showed, assuming the unique games conjecture, that there is no polynomial-time algorithm which approximates the minimum vertex cover problem with an approximation factor of $2 - \varepsilon$ for any positive $\varepsilon$ which is independent of~$|V|$.
In Section~\ref{sec:hardness} we show that the approximation ratio $\gamma(n)$ given in Theorem~\ref{th:algorithm} is optimal for $\sdp_n$ under the assumption of the unique games conjecture.  By using the arguments of the proof of Theorem~\ref{th:improvement} and by the UGC hardness of approximating $\sdp_1$ due to Khot and Naor \cite{KhotNaor} we get the following tight UGC hardness result for approximating $\sdp_n$.

\begin{theorem}
\label{th:hardness}
Under the assumption of the unique games conjecture there is no polynomial-time algorithm which approximates $\sdp_n$ with an approximation ratio greater than~$\gamma(n) + \varepsilon$ for any positive $\varepsilon$ which is independent of the matrix size $m$.
\end{theorem}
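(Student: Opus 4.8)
The plan is to reduce the (UGC) hardness of $\sdp_n$ to the UGC hardness of $\sdp_1$ proved by Khot and Naor \cite{KhotNaor}, using a refinement of Theorem~\ref{th:improvement}. The key observation is that the proof of Theorem~\ref{th:improvement} does not really use that $A$ has size $m$: it uses only that the vectors in the solution it rounds lie on the sphere $S^{m-1}$, since the improvement over the bound $2/\pi$ comes from working with functions of positive type on that sphere via Schoenberg's characterization \cite{Schoenberg}. Running the very same argument on an \emph{optimal $\sdp_n$ configuration} $x_1,\dots,x_m\in S^{n-1}$, rather than on an optimal $\sdp_\infty$ solution, produces signs $\varepsilon_1,\dots,\varepsilon_m\in\{-1,+1\}$ with
\begin{equation*}
\sum_{i=1}^m\sum_{j=1}^m A_{ij}\,\varepsilon_i\varepsilon_j \;\ge\; \frac{2}{\pi\gamma(n)}\sum_{i=1}^m\sum_{j=1}^m A_{ij}\,x_i\cdot x_j \;=\; \frac{2}{\pi\gamma(n)}\,\sdp_n(A).
\end{equation*}
So the first step of the proof is to record the inequality $\sdp_1(A)\ge \tfrac{2}{\pi\gamma(n)}\,\sdp_n(A)$, valid for every positive semidefinite $A$ and every $n\ge 1$.

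Granting this, suppose for contradiction that for some fixed $\varepsilon>0$ there is a randomized polynomial-time algorithm $\mathcal{A}$ that on input a positive semidefinite $A\in\R^{m\times m}$ returns a number $\mathcal{A}(A)$ with $(\gamma(n)+\varepsilon)\,\sdp_n(A)\le \mathcal{A}(A)\le \sdp_n(A)$. Consider the algorithm that outputs $\tfrac{2}{\pi\gamma(n)}\mathcal{A}(A)$. Using the displayed inequality on one side and the monotonicity $\sdp_1(A)\le\sdp_n(A)$ (which holds since $S^{0}\subset S^{1}\subset\cdots$, so the feasible region only grows) on the other,
\begin{equation*}
\Bigl(\frac{2}{\pi}+\frac{2\varepsilon}{\pi\gamma(n)}\Bigr)\sdp_1(A) \;\le\; \frac{2}{\pi\gamma(n)}\,\mathcal{A}(A) \;\le\; \frac{2}{\pi\gamma(n)}\,\sdp_n(A) \;\le\; \sdp_1(A).
\end{equation*}
Hence this algorithm approximates $\sdp_1$ within the ratio $\tfrac{2}{\pi}+\tfrac{2\varepsilon}{\pi\gamma(n)}>\tfrac{2}{\pi}$ in randomized polynomial time, contradicting the Khot--Naor theorem that under the unique games conjecture $\sdp_1$ admits no such approximation for any positive constant independent of $m$. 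This contradiction proves the theorem.

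The bookkeeping with the two error parameters and the monotonicity $\sdp_1\le\sdp_n$ are routine. The part that needs genuine care, and that I would write out in full, is the first step: one must go back through the proof of Theorem~\ref{th:improvement} and check that every occurrence of the matrix dimension $m$ there is in fact an occurrence of the dimension of the sphere on which the feasible vectors sit, so that substituting an optimal $\sdp_n$ configuration legitimately replaces $\gamma(m)$ by $\gamma(n)$. This is the crux because the naive bound $\sdp_1(A)\ge\tfrac{2}{\pi\gamma(m)}\,\sdp_n(A)$ obtained from Theorem~\ref{th:improvement} together with $\sdp_n(A)\le\sdp_\infty(A)$ has $\gamma(m)\to 1$ and so collapses to the trivial constant $2/\pi$ as $m$ grows; only the dimension-$n$ version yields the constant $\gamma(n)$ that makes the hardness match Theorem~\ref{th:algorithm}.
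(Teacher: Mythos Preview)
Your proof is correct and follows essentially the same route as the paper: both arguments apply Lemma~\ref{lem:psd} with dimension $n$ (since the relevant vectors lie in $S^{n-1}$) to convert a hypothetical $(\gamma(n)+\varepsilon)$-approximation for $\sdp_n$ into a $(2/\pi+\varepsilon')$-approximation for $\sdp_1$, and then invoke Khot--Naor. The only cosmetic difference is that the paper composes the hypothetical algorithm's output vectors directly with the Goemans--Williamson rounding, whereas you first record the structural inequality $\sdp_1(A)\ge\frac{2}{\pi\gamma(n)}\sdp_n(A)$ and then scale the hypothetical algorithm's value; the underlying mathematics is identical.
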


In Section~\ref{sec:Laplacian} we show that a better approximation ratio can be achieved when the matrix $A$ is the Laplacian matrix of a graph with nonnegative edge weights.

\section{Rounding schemes and functions of positive type}
\label{sec:psd}

In this section we discuss our rounding scheme which rounds an optimal solution of $\sdp_{\infty}$ to a feasible solution of $\sdp_n$. In the case $n = 1$ our rounding scheme is equivalent to the classical scheme of Goemans and Williamson \cite{GoemansWilliamson}. To analyze the rounding scheme we use functions of positive type for unit spheres. 
The randomized polynomial-time approximation algorithm which we use in the proofs of the theorems is the following three-step process. The last two steps are our rounding scheme.
\begin{enumerate}
\item Solve $\sdp_{\infty}(A)$, obtaining vectors $u_1, \ldots, u_m \in S^{m-1}$.
\item Choose $X = (X_{ij}) \in \R^{n \times m}$ so that every matrix entry $X_{ij}$ is distributed independently according to the standard normal distribution with mean $0$ and variance $1$, that is,~$X_{ij} \sim N(0,1)$.
\item Set $x_i = Xu_i/\|Xu_i\|\in S^{n-1}$ with $i = 1, \ldots, m$.
\end{enumerate}

The quality of the feasible solution $x_1, \ldots, x_m$ for $\sdp_n$ is measured by the expectation
\begin{equation*}
\E\biggl[\sum_{i=1}^m\sum_{j=1}^m A_{ij} \; x_i \cdot x_j\biggr] = 
\sum_{i=1}^m\sum_{j=1}^m A_{ij} \E\biggl[\frac{Xu_i}{\|Xu_i\|} \cdot \frac{Xu_j}{\|Xu_j\|} \biggr],
\end{equation*}
which we analyze in more detail.

For vectors $u, v \in S^{\infty}$ we define
\begin{equation}
\label{eq:expectation}
E_n(u,v) = \E\biggl[\frac{Xu}{\|Xu\|} \cdot \frac{Xv}{\|Xv\|} \biggr],
\end{equation}
where $X = (X_{ij})$ is a matrix with $n$ rows and infinitely many columns whose entries are distributed independently according to the the standard normal distribution. Of course, if $u, v \in S^{m-1}$, then it suffices to work with finite matrices $X \in \R^{n \times m}$.

The first important property of the expectation $E_n$ is that it is
\defi{invariant under $\ort(\infty)$}, i.e.\ for every $m$ it is
invariant under the orthogonal group $\ort(m) = \{T \in \R^{m \times
  m}: T^{\sf T} T = I_m\}$, where $I_m$ denotes the identity
matrix. More specifically, for every $m$ and every pair of vectors $u,
v \in S^{m-1}$ we have
\begin{equation*}
E_n(Tu,Tv) = E_n(u,v)\qquad\text{for all $T \in \ort(m)$.}
\end{equation*}
If $n = 1$, then 
\begin{equation*}
E_1(u,v) = \E[\sign(\xi \cdot u) \sign(\xi \cdot v)],
\end{equation*}
where $\xi \in \R^m$ is chosen at random from the $m$-dimensional standard normal distribution. By
Grothendieck's identity (see e.g.\ \cite[Lemma 10.2]{Jameson})
\begin{equation*}
\E[\sign(\xi \cdot u) \sign(\xi \cdot v)] = \frac{2}{\pi} \arcsin u \cdot v.
\end{equation*}
Hence, the expectation $E_1$ only depends on the inner product $t = u \cdot v$.  
For general $n$, the $\ort(\infty)$ invariance implies that this is true also for $E_n$.

The second important property of the expectation $E_n$ (now
interpreted as a function of the inner product) is that it is a
function of positive type for $S^{\infty}$, i.e.\ it is of positive
type for any unit sphere $S^{m-1}$, independent of the dimension~$m$.
In general, a continuous function $f : [-1,1] \to \R$ is called
\defi{a function of positive type for $S^{m-1}$} if the matrix $(f(v_i
\cdot v_j))_{1 \leq i, j \leq N}$ is positive semidefinite for every
positive integer~$N$ and every choice of vectors $v_1, \ldots, v_N \in
S^{m-1}$.  The expectation $E_n$ is of positive type for $S^{\infty}$
because one can write it as a sum of squares.
Schoenberg~\cite{Schoenberg} characterized the continuous functions $f
: [-1,1] \to \R$ which are of positive type for $S^{\infty}$: They are
of the form
\begin{equation*}
f(t) = \sum_{i = 0}^\infty f_i t^i,
\end{equation*}
with nonnegative $f_i$ and $\sum_{i = 0}^{\infty} f_i < \infty$. In the case $n = 1$ we have the series expansion
\begin{equation*}
E_1(t) = \frac{2}{\pi} \arcsin t = \frac{2}{\pi} \sum_{i=0}^\infty \frac{(2i)!}{2^{2i}(i!)^2(2i+1)} t^{2i+1}.
\end{equation*}
In Section~\ref{sec:algorithm} we treat the cases $n \geq 2$.

Suppose we develop the expectation $E_n(t)$ into the series $E_n(t) = \sum_{i = 0}^{\infty} f_i t^i$. Then because of Schoenberg's characterization the function $t \mapsto E_n(t) - f_1 t$ is of positive type for $S^{\infty}$ as well. This together with the inequality $\sum_{i,j} X_{ij} Y_{ij} \geq 0$, which holds for all positive semidefinite matrices $X, Y \in \R^{m \times m}$, implies
\begin{equation}
\label{eq:approxineq}
\sdp_n(A) \geq \sum_{i=1}^m \sum_{j=1}^m A_{ij} E_n(u_i,u_j)  \geq  f_1 \sum_{i=1}^m \sum_{j=1}^m A_{ij} \; u_i \cdot u_j = f_1 \sdp_{\infty}(A).
\end{equation}
When $n = 1$ the series expansion of $E_1$ gives $f_1 = 2/\pi$ and the above argument is essentially the one of Nesterov~\cite{Nesterov}. To improve on this (and in this way to improve the constant $2/\pi$ in inequality~\eqref{eq:rietznesterov}) one can refine the analysis by working with functions of positive type which depend on the dimension $m$. In Section~\ref{sec:improved} we show that $t \mapsto 2/\pi (\arcsin t - t/\gamma(m))$ is a function of positive type for $S^{m-1}$.  For the cases $n \geq 2$ we show in Section~\ref{sec:algorithm} that $f_1 = \gamma(n)$.

\section{Analysis of the approximation algorithm}
\label{sec:algorithm}

In this section we show that the expectation $E_n$ defined in \eqref{eq:expectation} is a function of positive type for $S^{\infty}$ and that in the series expansion $E_n(t) = \sum_{i=0}^{\infty} f_i t^i$ one has $f_1 = \gamma(n)$. These two facts combined with the discussion in Section~\ref{sec:psd} imply Theorem~\ref{th:algorithm}.
Let $u, v \in S^{m-1}$ be unit vectors and let $X = (X_{ij}) \in \R^{n \times m}$ be a random matrix whose entries are independently sampled from the standard normal distribution. Because of the invariance under the orthogonal group, for computing $E_n(u,v)$ we may assume that $u$ and $v$ are of the form
\begin{equation*}
\begin{split}
& u = (\cos\theta, \sin\theta, 0,\ldots, 0)^{\sf T}\\
& v = (\cos\theta, -\sin\theta, 0, \ldots, 0)^{\sf T}.
\end{split}
\end{equation*}
Then by the double-angle formula $\cos 2\theta = t$ with $t = u \cdot v$. 

We have
\begin{equation*}
Xu =
\begin{pmatrix}
X_{11} & X_{12}\\
\vdots & \vdots\\
X_{n1} & X_{n2}
\end{pmatrix}
\begin{pmatrix}
\cos\theta\\
\sin\theta
\end{pmatrix},
\quad
Xv =
\begin{pmatrix}
X_{11} & X_{12}\\
\vdots & \vdots\\
X_{n1} & X_{n2}
\end{pmatrix}
\begin{pmatrix}
\cos\theta\\
-\sin\theta
\end{pmatrix}.
\end{equation*}
Hence,
\begin{equation*}
\frac{Xu}{\|Xu\|} \cdot \frac{Xv}{\|Xv\|}  =
\frac{x^{\sf T} Y y}{\sqrt{(x^{\sf T}Yx) (y^{\sf T}Y y)}},
\end{equation*}
where $x = (\cos\theta,\sin\theta)^{\sf T}$, $y = (\cos\theta,-\sin\theta)^{\sf T}$, and $Y \in \R^{2 \times 2}$ is the Gram matrix of the two vectors $(X_{11}, \ldots, X_{n1})^{\sf T}$, $(X_{12}, \ldots, X_{n2})^{\sf T} \in \R^n$. 
By definition, $Y$ is distributed according to the Wishart distribution from multivariate statistics. This distribution is defined as follows (see e.g.\ Muirhead \cite{Muirhead}). Let $p$ and $q$ be positive integers so that $p \geq q$. The (standard) \defi{Wishart distribution}  $W_q(p)$ is the probability distribution of random matrices $Y = X^{\sf T} X \in \R^{q \times q}$, where the entries of the matrix $X = (X_{ij}) \in\R^{p\times q}$ are independently chosen from the standard normal distribution $X_{ij} \sim N(0,1)$. The density function of $Y \sim W_q(p)$~is
\begin{equation*}
\frac{1}{2^{pq/2} \Gamma_q(p/2)}e^{-\Tr(Y)/2}(\det Y)^{(p-q-1)/2},
\end{equation*}
where $\Gamma_q$ is the \defi{multivariate gamma function}, defined as
\begin{equation*}
\Gamma_q(x) = \pi^{q(q-1)/4}\prod_{i=1}^q\Gamma\Big(x - \frac{i-1}{2}\Big).
\end{equation*}

We denote the cone of positive semidefinite matrices of size $q \times q$ by $S^q_{\geq 0}$. In our case $p = n$ and $q = 2$. We can write $E_n(t)$ as
\begin{equation*}
E_n(t) = \frac{1}{2^n \Gamma_2(n/2)} \int_{S^{2}_{\geq 0}} 
\frac{x^{\sf T} Y y}{\sqrt{(x^{\sf T}Yx) (y^{\sf T} Y y)}}
e^{-\Tr(Y)/2} (\det Y)^{(n-3)/2} dY,
\end{equation*}
where $t =\cos 2\theta$, and $x$ as well as $y$ depend on $\theta$. The parameterization of  the cone $S^2_{\geq 0}$ given by
\begin{equation*}
\label{psdmatrix}
S^2_{\geq 0} =
\left\{
Y = \begin{pmatrix} \frac{a}{2} + \alpha\cos\phi & \alpha\sin\phi\\ \alpha\sin\phi & \frac{a}{2} - \alpha\cos\phi  \end{pmatrix} : \phi \in [0,2\pi], \alpha \in [0,a/2], a \in \R_{\geq 0}\right\}
\end{equation*}
allows us to write the integral in a more explicit form. With this parametrization we have 
\begin{equation*}
\Tr(Y) = a, \quad \det(Y) = \frac{a^2}{4} - \alpha^2, \quad dY =\alpha \; d\phi d\alpha da,
\end{equation*}
and
\begin{equation*}
\begin{split}
& x^{\sf T}Yy = \frac{at}{2} + \alpha\cos\phi,\\
& x^{\sf T}Yx = \frac{a}{2} + \alpha(t\cos\phi + 2\sin\theta\cos\theta\sin\phi),\\
& y^{\sf T}Yy = \frac{a}{2} + \alpha(t\cos\phi - 2\sin\theta\cos\theta\sin\phi).\\
\end{split}
\end{equation*}
So,
\begin{equation*}
\begin{split}
E_n(t) = \frac{1}{2^n\Gamma_2(n/2)} \int_{0}^{\infty} \int_0^{a/2} \int_{0}^{2\pi} \frac{\frac{at}{2}+\alpha\cos\phi}{\sqrt{(\frac{a}{2}+\alpha t\cos\phi)^2-\alpha^2(1-t^2)(\sin \phi)^2}}\qquad&\\
{}\cdot e^{-a/2} \left(\frac{a^2}{4} - \alpha^2\right)^{(n-3)/2}  \alpha \; d\phi d\alpha da.&
\end{split}
\end{equation*}
Substituting $\alpha = (a/2)r$ and integrating over $a$ yields
\begin{equation*} 
E_n(t) = \frac{\Gamma(n)}{2^{n-1}\Gamma_2(n/2)} \int_{0}^1 \int_{0}^{2\pi}\frac{(t+r\cos\phi)r (1-r^2)^{(n-3)/2}}{\sqrt{(1+rt\cos\phi)^2-r^2(1-t^2)(\sin\phi)^2}} d\phi dr.
\end{equation*}
Using Legendre's duplication formula (see \cite[Theorem 1.5.1]{AndrewsAskeyRoy}) 
$
\Gamma(2x)\Gamma(1/2) = 2^{2x-1}\Gamma(x)\Gamma(x+1/2)
$
one can simplify
\begin{equation*}
\frac{\Gamma(n)}{2^{n-1}\Gamma_2(n/2)}  = \frac{n-1}{2\pi}.
\end{equation*}
Recall from \eqref{eq:approxineq} that the approximation ratio is given by the coefficient $f_1$ in the series expansion $E_n(t) = \sum_{i=0}^{\infty} f_i t^i$. Now we compute $f_1$:
\begin{eqnarray*}
f_1 & = & \frac{\partial E_n}{\partial t}(0)\\
& = & \frac{n-1}{2\pi} \int_{0}^1 \int_{0}^{2\pi} \frac{r(1-r^2)^{(n-1)/2}}{(1 - r^2(\sin \phi)^2)^{3/2}}d\phi dr.\\
\end{eqnarray*}
Using Euler's integral representation of the hypergeometric function \cite[Theorem 2.2.1]{AndrewsAskeyRoy} and by substitution we get
\begin{eqnarray*}
f_1 & = & \frac{n-1}{2\pi}\int_{0}^{2\pi} \frac{\Gamma(1)\Gamma((n+1)/2)}{2\Gamma((n+3)/2)} 
{}_{2}F_{1} \left(\begin{array}{cc} 3/2, 1\\ (n+3)/2\end{array}; \sin^2\phi \right) d\phi\\
& = & \frac{n-1}{4\pi} \frac{\Gamma((n+1)/2)}{\Gamma((n+3)/2)} 4 \int_0^1 
{}_{2}F_{1} \left(\begin{array}{cc} 3/2, 1\\ (n+3)/2\end{array}; t^2 \right) (1-t^2)^{-1/2} dt\\
& = & \frac{n-1}{\pi} \frac{\Gamma((n+1)/2)}{\Gamma((n+3)/2)}\frac{1}{2} \int_0^1 
{}_{2}F_{1} \left(\begin{array}{cc} 3/2, 1\\ (n+3)/2\end{array}; t \right)
(1-t)^{-1/2} t^{-1/2  }dt.
\end{eqnarray*}
This simplies futher by Euler's generalized integral \cite[(2.2.2)]{AndrewsAskeyRoy}, and Gauss's summation formula \cite[Theorem 2.2.2]{AndrewsAskeyRoy}
\begin{eqnarray*}
f_1 & = & \frac{n-1}{2\pi} \frac{\Gamma((n+1)/2)}{\Gamma((n+3)/2)} \frac{\Gamma(1/2)\Gamma(1/2)}{\Gamma(1)}
{}_{3}F_{2} \left(\begin{array}{cc} 3/2, 1,1/2\\ (n+3)/2,1\end{array}; 1 \right)\\
& = & \frac{n-1}{2} \frac{\Gamma((n+1)/2)}{\Gamma((n+3)/2)}
{}_{2}F_{1} \left(\begin{array}{cc} 3/2, 1/2\\ (n+3)/2\end{array}; 1 \right)\\
& = & \frac{n-1}{2} \frac{\Gamma((n+1)/2)}{\Gamma((n+3)/2)} \frac{\Gamma((n+3)/2)\Gamma((n-1)/2)}{\Gamma(n/2)\Gamma((n+2)/2)}\\
& = & \frac{2}{n}\left(\frac{\Gamma((n+1)/2)}{\Gamma(n/2)}\right)^2.
\end{eqnarray*}


%
%
%

\section{Improved analysis}
\label{sec:improved}

Nesterov's proof of inequality \eqref{eq:rietznesterov} relies on the
fact that the function $t \mapsto 2/\pi (\arcsin t - t)$ is of
positive type for $S^{\infty}$. Now we determine the largest
value~$c(m)$ so that the function $t \mapsto 2/\pi (\arcsin t - c(m)
t)$ is of positive type for $S^{m-1}$. By
this we improve the approximation ratio of the algorithm given in
Section~\ref{sec:psd} for $\sdp_1$ from $2/\pi$ to $(2/\pi) c(m)$. The
following lemma showing $c(m) = 1/\gamma(m)$ implies
Theorem~\ref{th:improvement}.

\begin{lemma}
\label{lem:psd}
The function
\begin{equation*}
t \mapsto \frac{2}{\pi} \left(\arcsin t - \frac{t}{\gamma(m)} \right)
\end{equation*}
is of positive type for $S^{m-1}$.
\end{lemma}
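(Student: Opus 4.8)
The plan is to use the finite-dimensional version of Schoenberg's characterization \cite{Schoenberg}: a continuous function $f\colon[-1,1]\to\R$ is of positive type for $S^{m-1}$ if and only if it can be written as a series $f(t)=\sum_{k=0}^{\infty}b_k C_k^{\lambda}(t)$ with nonnegative coefficients $b_k$ in the Gegenbauer polynomials $C_k^{\lambda}$ of parameter $\lambda=(m-2)/2$, which are orthogonal on $[-1,1]$ with respect to the weight $(1-t^2)^{\lambda-1/2}$. (For $m=2$ one replaces $C_k^{\lambda}$ by the Chebyshev polynomials $t\mapsto\cos(k\arccos t)$ and the argument below is unchanged; the case $m=1$ is trivial since $S^0=\{-1,1\}$ and $\frac{2}{\pi}(\arcsin(\pm1)-(\pm1)/\gamma(1))=0$.) Since $\frac{2}{\pi}\arcsin t=E_1(t)$ is of positive type for $S^{\infty}$, it is of positive type for $S^{m-1}$, so in its Gegenbauer expansion $\frac{2}{\pi}\arcsin t=\sum_{k\ge0}b_k C_k^{\lambda}(t)$ all coefficients $b_k$ are nonnegative.

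The degree-one Gegenbauer polynomial is $C_1^{\lambda}(t)=2\lambda t$, so $t=\tfrac{1}{2\lambda}C_1^{\lambda}(t)$ and therefore
\begin{equation*}
\frac{2}{\pi}\Bigl(\arcsin t-\frac{t}{\gamma(m)}\Bigr)=\Bigl(b_1-\frac{1}{\pi\lambda\gamma(m)}\Bigr)C_1^{\lambda}(t)+\sum_{k\neq1}b_k C_k^{\lambda}(t).
\end{equation*}
Every summand with $k\neq1$ has a nonnegative coefficient and each $C_k^{\lambda}$ is itself of positive type for $S^{m-1}$ (addition formula for spherical harmonics), so by Schoenberg's characterization it suffices to show $b_1\ge\frac{1}{\pi\lambda\gamma(m)}$. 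I would in fact prove equality, which also shows that $1/\gamma(m)$ is the largest constant $c(m)$ for which $t\mapsto\frac{2}{\pi}(\arcsin t-c(m)t)$ is of positive type for $S^{m-1}$.

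To evaluate $b_1$ I would use the orthogonality relations,
\begin{equation*}
b_1=\frac{\int_{-1}^{1}\frac{2}{\pi}\arcsin(t)\,C_1^{\lambda}(t)\,(1-t^2)^{\lambda-1/2}\,dt}{\int_{-1}^{1}C_1^{\lambda}(t)^2\,(1-t^2)^{\lambda-1/2}\,dt}.
\end{equation*}
With $C_1^{\lambda}(t)=2\lambda t$ the denominator is $4\lambda^2 B(3/2,\lambda+1/2)$. In the numerator the substitution $t=\sin\psi$ turns $\int_{-1}^{1}\arcsin(t)\,t\,(1-t^2)^{\lambda-1/2}\,dt$ into $2\int_{0}^{\pi/2}\psi\sin\psi(\cos\psi)^{2\lambda}\,d\psi$, and one integration by parts (with antiderivative $-(\cos\psi)^{2\lambda+1}/(2\lambda+1)$ of $\sin\psi(\cos\psi)^{2\lambda}$, the boundary term vanishing since $m\ge2$) reduces this to $\frac{2}{2\lambda+1}\int_{0}^{\pi/2}(\cos\psi)^{2\lambda+1}\,d\psi$, again a Beta integral. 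Expressing everything through the Gamma function, simplifying with $\Gamma(\lambda+2)=(\lambda+1)\Gamma(\lambda+1)$, $(\lambda+\tfrac12)\Gamma(\lambda+\tfrac12)=\Gamma(\lambda+\tfrac32)$ and $2\lambda+1=2(\lambda+\tfrac12)$, and then substituting $\lambda+1=m/2$ and $\lambda+\tfrac32=(m+1)/2$, one obtains $b_1=\frac{1}{\pi\lambda\gamma(m)}$, so the coefficient of $C_1^{\lambda}$ in the displayed identity is exactly zero; this proves the lemma.

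The only real obstacle is this last computation: correctly normalizing the Gegenbauer expansion — keeping track of the factor $2\lambda$ in $C_1^{\lambda}$ and of the weight exponent $\lambda-1/2$ — and recognizing the resulting quotient of Gamma values as $1/(\pi\lambda\gamma(m))$. What makes it manageable is that, because $\frac{2}{\pi}\arcsin$ is already of positive type, the entire content of the lemma is concentrated in the single coefficient $b_1$, so exactly one integral has to be evaluated, and after $t=\sin\psi$ and one integration by parts it is merely a product of Beta integrals.
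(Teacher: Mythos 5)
Your proposal is correct and takes essentially the same route as the paper. The paper works with Jacobi polynomials $P_k^{(\alpha,\alpha)}$ for $\alpha=(m-3)/2$ and computes $c(m)=(\arcsin t, t)_\alpha/(t,t)_\alpha$ directly; you work with Gegenbauer polynomials $C_k^{\lambda}$ for $\lambda=(m-2)/2=\alpha+1/2$, which are scalar multiples of the same Jacobi polynomials with the same orthogonality weight $(1-t^2)^{\alpha}$, and compute the degree-one coefficient $b_1$ of $\tfrac{2}{\pi}\arcsin$ — the very same ratio up to the harmless normalization $C_1^{\lambda}(t)=2\lambda t$ versus $P_1^{(\alpha,\alpha)}(t)=(\alpha+1)t$. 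The substitution $t=\sin\psi$ followed by the Beta-integral evaluation mirrors the paper's computation (the paper skips the integration by parts and quotes the antiderivative result directly, but the computation is the same), and your closing Gamma-function manipulations reproduce $c(m)=1/\gamma(m)$. If anything, your write-up is slightly more careful at the edges: you note that the boundary term vanishes only for $m\ge2$, and you dispose of $m=1$ separately (where the weight $(1-t^2)^{\alpha}$ is not integrable and Schoenberg's finite-dimensional theorem does not apply in this form), a point the paper leaves implicit.
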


\begin{proof}
We equip the space of all continuous functions $f : [-1,1] \to \R$ with the inner product
\begin{equation*}
(f, g)_{\alpha} = \int_{-1}^1 f(t) g(t) (1-t^2)^\alpha dt, 
\end{equation*}
where $\alpha = (m-3)/2$. With this inner product the Jacobi polynomials satisfy the orthogonality relation
\begin{equation*}
(P_i^{(\alpha,\alpha)}, P_j^{(\alpha,\alpha)})_{\alpha} = 0, \quad \text{if $i \neq j$},
\end{equation*}
where $P_i^{(\alpha,\alpha)}$ is the Jacobi polynomial of degree $i$ with parameters $(\alpha, \alpha)$, see e.g.\ Andrews, Askey, and Roy \cite{AndrewsAskeyRoy}. 
Schoenberg \cite{Schoenberg} showed that a continuous function $f : [-1,1] \to \R$ is of positive type for $S^{m-1}$ if and only if it is of the form 
\begin{equation*}
f(t) = \sum_{i = 0}^\infty f_i P_i^{(\alpha,\alpha)}(t),
\end{equation*}
with nonnegative coefficients $f_i$ such that $\sum_{i = 0}^{\infty} f_i < \infty$. 

Now we interpret $\arcsin$ as a function of positive type for $S^{m-1}$ where $m$ is fixed. By the orthogonality relation and because of Schoenberg's result the function $\arcsin t - c(m)t$ is of positive type for $S^{m-1}$ if and only if
\begin{equation*}
(\arcsin t - c(m) t, P_i^{(\alpha,\alpha)})_{\alpha} \geq 0,\qquad\text{for all $i = 0$, $1$, $2$, \dots.}
\end{equation*}
We have $P_1^{(\alpha,\alpha)}(t) = (\alpha + 1)t$. By the orthogonality relation and because the $\arcsin$ function is of positive type we get, for $i \ne 1$,
\begin{equation*}
(\arcsin t -c(m)t , P_i^{(\alpha,\alpha)})_{\alpha} = (\arcsin t, P_i^{(\alpha,\alpha)})_{\alpha} \ge 0.
\end{equation*}
This implies that the maximum $c(m)$ such that $\arcsin t - c(m)t$ is of positive type for $S^{m-1}$ is given by $c(m) = (\arcsin t, t)_{\alpha}/(t,t)_{\alpha}$.

The numerator of $c(m)$ equals
\begin{equation*}
\begin{split}
(\arcsin t, t)_{\alpha} & = \int_{-1}^1 \arcsin(t) t (1-t^2)^{\alpha} dt  \\
& = \int_{-\pi/2}^{\pi/2} \theta \sin \theta (\cos \theta)^{2\alpha+1} d\theta\\
& = \frac{\Gamma(1/2)\Gamma(a+3/2)}{(2\alpha+2) \Gamma(\alpha + 2)}.
\end{split}
\end{equation*}
The denominator of $c(m)$ equals
\begin{equation*}
(t,t)_{\alpha} = \int_{-1}^1 t^2 (1-t^2)^{\alpha} dt = \frac{\Gamma(3/2) \Gamma(\alpha+1)}{\Gamma(\alpha + 5/2)},
\end{equation*}
where we used the beta integral (see e.g.\ Andrews, Askey, and Roy \cite[(1.1.21)]{AndrewsAskeyRoy})
\begin{equation*}
\int_{0}^1t^{2x-1}(1-t^2)^{y-1}dr = \int_0^{\pi/2} (\sin \theta)^{2x-1}(\cos \theta)^{2y-1}d\theta = \frac{\Gamma(x)\Gamma(y)}{2\Gamma(x+y)},
\end{equation*}
Now, by using the functional equation $x \Gamma(x) = \Gamma(x+1)$, the desired equality $c(m) = 1/\gamma(m)$ follows.
\qed
\end{proof}

\section{Hardness of approximation}
\label{sec:hardness}

\begin{proof}[of Theorem~\ref{th:hardness}]
Suppose that $\rho$ is the largest approximation ratio a poly\-nomial-time algorithm can achieve for $\sdp_n$. Let $u_1, \ldots, u_m \in S^{n-1}$ be an approximate solution to $\sdp_n(A)$ coming from such a poly\-nomial-time algorithm. Then,
\begin{equation*}
\sum_{i=1}^m \sum_{j=1}^m A_{ij} \; u_i \cdot u_j \geq \rho \sdp_n(A).
\end{equation*}
Applying the rounding scheme to $u_1, \ldots, u_m \in S^{n-1}$ gives $x_1, \ldots, x_m \in \{-1,+1\}$  with
\begin{equation*}
\begin{split}
\E\biggl[\sum_{i=1}^m\sum_{j=1}^m A_{ij} \; x_i x_j\biggr] & = \frac{2}{\pi}
\sum_{i=1}^m \sum_{j=1}^m A_{ij} \arcsin u_i \cdot u_j\\
&  \geq \frac{2 \rho}{\pi \gamma(n)}\sdp_n(A),
\end{split}
\end{equation*}
where we used that the matrix $A$  and the matrix
\begin{equation*}
{\left(\frac{2}{\pi}\left(\arcsin u_i \cdot u_j - \frac{u_i \cdot u_j}{\gamma(n)}\right)\right)}_{1 \leq i,j \leq m}
\end{equation*}
are both positive semidefinite. The last statement follows from Lemma~\ref{lem:psd} applied to the vectors $u_1, \ldots, u_m $ lying in $S^{n-1}$. Since $\sdp_n(A) \geq \sdp_1(A)$, this is a polynomial-time approximation algorithm for $\sdp_1$ with approximation ratio at least $(2\rho)/(\pi\gamma(n))$. The UGC hardness result of Khot and Naor now implies that $\rho \leq \gamma(n)$.
\qed
\end{proof}

\section{The case of Laplacian matrices}
\label{sec:Laplacian}

In this section we show that one can improve the approximation ratio of the algorithm if the positive semidefinite matrix $A = (A_{ij}) \in \R^{m \times m}$ has the following special structure:
\begin{equation*}
\begin{split}
A_{ij} \leq 0, & \quad \text{if $i \neq j$,}\\
\sum_{i = 1}^n A_{ij} = 0, & \quad \text{for every $j = 1, \ldots, n$}.
\end{split}
\end{equation*}
This happens for instance when $A$ is the Laplacian matrix of a weighted graph with nonnegative edge weights. A by now standard argument due to Goemans and Williamson \cite{GoemansWilliamson} shows that the algorithm has the approximation ratio
\begin{equation*}
v(n) = \min\left\{\frac{1-E_n(t)}{1-t} : t \in [-1,1]\right\}.
\end{equation*}

To see this, we write out the expected value of the approximation and use the properties of $A$:
\begin{eqnarray*}
\E\Big[\sum_{i,j=1}^nA_{ij}x_i\cdot x_j\Big] &=& \sum_{i,j=1}^nA_{ij}E_n(u_i\cdot u_j)\\
&=& \sum_{i\not= j}(-A_{ij})\left(\frac{1 - E_k(u_i\cdot u_j)}{1-u_i\cdot u_j}\right)(1-u_i\cdot u_j)\\
&\geq& v(n)\sum_{i, j=1}^nA_{ij}u_i\cdot u_j\\
&=&  v(n)\sdp_{\infty}(A).
\end{eqnarray*}

The case $n = 1$ corresponds to the MAX CUT approximation algorithm of Goemans and Williamson \cite{GoemansWilliamson}.  For this we have
\begin{equation*}
v(1) = 0.8785\dots,\quad \text{minimum attained at $t_0 = -0.689\dots$.}
\end{equation*}
We computed the values $v(2)$ and $v(3)$ numerically and got
\begin{equation*}
\begin{split}
& v(2) = 0.9349\dots,\quad \text{minimum attained at $t_0 = -0.617\dots$,}\\
& v(3) = 0.9563\dots, \quad \text{minimum attained at $t_0 = -0.584\dots$.}\\
\end{split}
\end{equation*}

\section*{Acknowledgements}
 
We thank Joe Eaton, Monique Laurent, Robb Muirhead, and Achill Sch\"urmann for helpful discussions. The third auhor thanks the Institute for Pure \& Applied Mathematics at UCLA for its hospitality and support.

\end{document}